\numberwithin{equation}{section}
\theoremstyle{plain}
\newtheorem{thm}{Theorem}[section]
\newtheorem{theorem}[thm]{Theorem}
\theoremstyle{definition}
\newtheorem{definition}[thm]{Definition}
\newcommand{\Rmnum}[1]{\extendafter\@slowromancap\romannumeral#1@}
\newcommand{\tabincell}[2]{\begin{tabular}{@{}#1@{}}#2\end{tabular}}
\begin{document}
\title[Numerical Computations For Operator Axioms]
{Numerical Computations For Operator Axioms} %
\author[Pith Peishu Xie]{Pith Peishu Xie}
\address{Axiom Studio \\ PO Box \#3620 \\ Jiangdongmen Postoffice \\ Gulou District %
\\ Nanjing, 210036, P.R. China} %
\email{pith.xie@gmail.com} %
\subjclass[2010]{Primary 11Y16; Secondary 65H05, 49M25, 11B85, 03D05.}
\keywords{numerical computation, numerical analysis, operator axiom}
\begin{abstract}
The Operator axioms have produced new real numbers with new operators. New operators naturally produce new equations and thus extend the traditional mathematical models which are selected to describe various scientific rules. So new operators help to describe complex scientific rules which are difficult described by traditional equations and have an enormous application potential. As to the equations including new operators, engineering computation often need the approximate solutions reflecting an intuitive order relation and equivalence relation. However, the order relation and equivalence relation of real numbers are not as intuitive as those of base-b expansions. Thus, this paper introduces numerical computations to approximate all real numbers with base-b expansions.
\end{abstract}
\maketitle
\setcounter{tocdepth}{5} \setcounter{page}{1}
%\tableofcontents
%\newpage
%

\section{Introduction}
In \cite{Ref1}, we distinguished the limit from the infinite sequence. In \cite{Ref2},  we defined the Operator axioms to extend the traditional real number system. In \cite{Ref3},  we promoted the research in the following areas:

1. We improved on the Operator axioms. 

2. We defined the VE function and EV function. For clarity, we rename VE Function\cite{Ref3} to Prefix Function. For clarity, we rename EV Function\cite{Ref3} to Suffix Function. 

3. We proved two theorems about the Prefix function and Suffix function.

The Operator axioms forms a new arithmetic axiom. The \cite[Definition 2.2]{Ref3} defines real number system on the basis of the logical calculus $ \{ \Phi, \Psi \} $. The \cite[TABLE 2]{Ref2} defines new operators according to the definition of number systems. Real operators naturally produce new equations such as $ y = [x++++[1+1]] $, $ y = [[1+1]----x] $, $ y = [x////[1+1]] $ and so on. In other words, real operators extend the traditional mathematical models which are selected to describe various scientific rules. Operator axioms have included infinite operators, so no other operator can be added to them. This means the Operator axioms is a complete real number system. In fact, infinite operators imply the completeness.

Thus, real operators exhibit potential value as follows:

1. Real operators can give new equations and inequalities so as to precisely describe the relation of mathematical objects.

2. Real operators can give new equations and inequalities so as to precisely describe the relation of scientific objects.

So real operators help to describe complex scientific rules which are difficult described by traditional equations and have an enormous application potential.

As to the equations including real operators, engineering computation often need the approximate solutions reflecting an intuitive order relation and equivalence relation. Although the order relation and equivalence relation of real numbers are consistent, they are not as intuitive as those of base-b expansions. In practice, it is quicker to determine the order relation and equivalence relation of base-b expansions. So we introduce numerical computations to approximate real numbers with base-b expansions.

The numerical computations we proposed are not the best methods to approximate real numbers with base-b expansions, but the simple methods to approximate real numbers with base-b expansions. The compution complexity of the numerical computations we proposed could be promoted furtherly. However, we first prove that the Operator axioms can run on any modern computer. The numerical computation we proposed blends mathematics and computer science. Modern science depends on both the mathematics and the computer science. Arithmetic is the core of both the mathematics and the science. As a senior arithmetic, the Operator axioms will promote both the mathematics and the science in the future.

\begin{theorem}\label{Them_SUM}
Any positive number $ \xi $ may be expressed as a limit of an infinite base-b expansion sequence %
\begin{eqnarray}
\lim\limits_{n \to \infty} A_{1} A_{2} \cdots A_{s+1}.\ a_1 a_2 a_3 \cdots a_n, %
\end{eqnarray}

where $ 0 \leq A_{1} < b, 0 \leq A_{2} < b, \cdots , 0 \leq a_{n} < b $, %
not all A and a are 0, and an infinity of the $ a_n $ are less than (b-1). If $ \xi \geq 1 $, %
then $ A_{1} \geq 0 $.
\end{theorem}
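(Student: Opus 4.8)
The plan is to run the classical greedy digit extraction, keeping in mind the distinction between an infinite sequence and its limit emphasized in \cite{Ref1}: the object $A_1 A_2 \cdots A_{s+1}.\,a_1 a_2 \cdots a_n$ is to be produced as an honest sequence of finite (hence rational) base-$b$ expansions whose limit is $\xi$. First I would peel off the integer part: using the order on the reals, let $m$ be the largest non-negative integer with $m \le \xi$; since $\xi > 0$ this $m$ exists and $0 \le \xi - m < 1$. If $m \ge 1$, expand $m$ in base $b$ by choosing $s \ge 0$ minimal, so that there are digits $A_1, \dots, A_{s+1}$ with $0 \le A_i < b$, $A_1 \ne 0$, and $m = \sum_{i=1}^{s+1} A_i\, b^{\,s+1-i}$; if $m = 0$, set $s = 0$ and $A_1 = 0$. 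In particular, when $\xi \ge 1$ we get $m \ge 1$, so the leading digit is nonzero, which gives the final clause of the statement.

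Next I would generate the fractional digits recursively: put $r_0 = \xi - m \in [0,1)$ and, for $n \ge 1$, $a_n = \lfloor b\, r_{n-1}\rfloor$ and $r_n = b\, r_{n-1} - a_n$. A one-line induction gives $0 \le a_n < b$ and $0 \le r_n < 1$ for every $n$. Setting $S_n = m + \sum_{k=1}^{n} a_k\, b^{-k}$, a second induction yields the exact remainder identity $\xi - S_n = b^{-n} r_n$, hence $0 \le \xi - S_n < b^{-n}$. Since $b \ge 2$ forces $b^{-n} \to 0$, the sequence $(S_n)$ has limit $\xi$, which is precisely the claimed representation. Non-vanishing of the digits is then automatic: if every $A_i$ and every $a_n$ were $0$, then $S_n = 0$ for all $n$ and $\xi = 0$, contradicting $\xi > 0$.

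Finally I would exclude a terminal run of $(b-1)$'s. If $a_n = b-1$ for every $n > N$, then summing the geometric tail gives $\xi - S_N = \sum_{k > N}(b-1)b^{-k} = b^{-N}$; comparing with $\xi - S_N = b^{-N} r_N$ forces $r_N = 1$, contradicting $r_N < 1$. Hence infinitely many $a_n$ satisfy $a_n < b-1$, as required.

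The hard part will not be any single computation --- both inductions are routine --- but making the limit step rigorous in the present framework: one must invoke the precise notion of limit of a sequence of base-$b$ expansions from \cite{Ref1} and check that $0 \le \xi - S_n < b^{-n}$ together with $b^{-n} \to 0$ genuinely delivers $S_n \to \xi$ there, and likewise that the geometric-tail evaluation in the last step is a valid limit identity in that setting rather than a purely formal manipulation.
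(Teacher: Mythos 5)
Your proposal is correct and follows essentially the same route as the paper: peel off the integer part by the floor function, expand it in base $b$ by repeated division, then generate the fractional digits by the greedy recursion $a_n = \lfloor b\,r_{n-1}\rfloor$, $r_n = b\,r_{n-1} - a_n$, and use the remainder estimate $0 \le \xi - S_n < b^{-n}$ to conclude convergence. Your $r_n$ and $S_n$ are the paper's $f_{n+1}$ and $X + x_n$ under different names.

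Where you go beyond the paper's own write-up is in explicitly discharging the last two clauses of the statement. The paper's proof stops at ``and the claim follows'' once the series sum is identified with $\xi$: it never argues that not every digit can be $0$, nor that infinitely many $a_n$ are $< b-1$. Your observation that all-zero digits would force $\xi = 0$, and your geometric-tail argument that an eventual run of $(b-1)$'s would force some $r_N = 1$ (impossible since every $r_n$ lies strictly below $1$), supply exactly the pieces the paper leaves tacit, and both are sound. One small caution: the theorem as printed says ``If $\xi \ge 1$, then $A_1 \ge 0$,'' which is vacuous; you read it, as the proof evidently intends, as ``the leading digit is nonzero,'' and your derivation (if $\xi \ge 1$ then $m \ge 1$, so the top digit of $m$ is nonzero) matches the paper's $0 < A_1 = [b^{-s}X]$ when $X > 0$. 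Your closing remark about needing the precise limit notion from \cite{Ref1} is a fair methodological point, but the paper itself does not elaborate on it, so nothing in your proof is contradicted by the paper's treatment.
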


\begin{proof}
Let $ [\xi] $ be the integral part of $ \xi $. Then we write %
\begin{eqnarray}\label{1.1}
\xi = [\xi] + x = X + x,
\end{eqnarray}
where $ X $ is an integer and $ 0 \leq x < 1 $, and consider $ X $ and $ x $ separately. %

If $ X > 0 $ and $ b^{s} \leq x < b^{s+1} $, %
and $ A_1 $ and $ X_1 $ are the quotient and remainder when $ X $ is divided by $ b^{s} $, %
then $ X = A_1 \cdot b^{s} + X_1 $, where $ 0 < A_1 = [b^{-s}X] < b $, %
$ 0 \leq X_1 < b^s $. %

Similarly %
\begin{eqnarray*}
X_1 = & A_2 \cdot b^{s-1} + X_2 & (0 \leq A_2 < b, 0 \leq X_2 < b^{s-1}), \\
X_2 = & A_3 \cdot b^{s-2} + X_3 & (0 \leq A_3 < b, 0 \leq X_3 < b^{s-2}), \\
\cdots & \cdots & \cdots \\
X_{s-1} = & A_s \cdot b + X_s & (0 \leq A_s < b, 0 \leq X_s < b), \\
X_s = & A_{s+1} & (0 \leq A_{s+1} < b).
\end{eqnarray*}
Thus $ X $ may be expressed uniquely in the form %
\begin{eqnarray}
X = A_1 \cdot b^s + A_2 \cdot b^{s-1} + \cdots + A_s \cdot b +
A_{s+1},
\end{eqnarray}
where every $ A $ is one of 0, 1, $ \cdots $, (b-1), and $ A_1 $ is not 0. We abbreviate %
this expression to %
\begin{eqnarray}\label{1.3}
X = A_1 A_2 \cdots A_s A_{s+1},
\end{eqnarray}
the ordinary representation of $ X $ in base-b expansion notation. %

Passing to $ x $, we write %
\begin{eqnarray*}
& X = f_1 & (0 \leq f_1 < 1).
\end{eqnarray*}
We suppose that $ a_1 = [b f_1] $, so that %
\begin{eqnarray*}
\frac{a_1}{b} \leq f_1 < \frac{a_1 + 1}{b};
\end{eqnarray*}
$ a_1 $ is one of 0, 1, $ \cdots $, (b-1), and %
\begin{eqnarray*}
a_1 = [b f_1], & b f_1 = a_1 + f_2 & (0 \leq f_2 < 1).
\end{eqnarray*}

Similarly, we define $ a_2, a_3, \cdots $ by %
\begin{eqnarray*}
a_2 = [b f_2], & b f_2 = a_2 + f_3 & (0 \leq f_3 < 1), \\
a_3 = [b f_3], & b f_3 = a_3 + f_4 & (0 \leq f_4 < 1), \\
\cdots & \cdots & \cdots
\end{eqnarray*}
Every $ a_n $ is one of 0, 1, $ \cdots $, (b-1). Thus %
\begin{eqnarray}\label{1.4}
x = x_n + g_{n+1},
\end{eqnarray}
where
\begin{eqnarray}
x_n = \frac{a_1}{b} + \frac{a_2}{b^2} + \cdots + \frac{a_n}{b^n}, \label{1.5} \\
0 \leq g_{n+1} = \frac{f_{n+1}}{b^n} < \frac{1}{b^n}.
\end{eqnarray}

We thus define a base-b expansion $ .a_1 a_2 a_3 \cdots a_n \cdots $ associated with $ x $. We call %
$ a_1, a_2, \cdots $ the first, second, $ \cdots $ \emph{digits} of the base-b expansion. %

Since $ a_n < b $, the series %
\begin{eqnarray}\label{1.7}
\sum\limits_{1}^{\infty} \frac{a_n}{b^n}
\end{eqnarray}
is convergent; and since $ g_{n+1} \rightarrow 0 $, its sum is $ x $. We may therefore write %
\begin{eqnarray}\label{1.8}
x = .\ a_1 a_2 a_3 \cdots,
\end{eqnarray}
the right-hand side being an abbreviation for the series (\ref{1.7}). %

We now combine (\ref{1.1}), (\ref{1.3}), and (\ref{1.8}) in the form %
\begin{eqnarray}
\xi = X + x = A_1 A_2 \cdots A_s A_{s+1}.\ a_1 a_2 a_3 \cdots; %
\end{eqnarray}
and the claim follows. %
\end{proof}

\textbf{Theorem \ref{Them_SUM}} implies that every real number has base-b expansions arbitrary close to it. So in numerical computations for the Operator axioms, all operands and outputs are denoted by base-b expansions to intuitively show the order relation and equivalence relation.

The paper is organized as follows. In Section 2, we define the operation order for all operations in the Operator axioms. In Section 3, we construct the numerical computations for binary operations. In Section 4, we define some concepts in the Operator axioms.

\section{Operation Order}
In the Operator axioms, the number `1' is the only base-b expansion while the others derive from the operation of two numbers and one operator. For example, the number ``$[[1+[1+1]]----[1+1]]$" derives from the operation of the number ``$[1+[1+1]]$", the number ``$[1+1]$" and the real operator ``$----$".

\begin{definition}
\emph{Numerical computation} is a conversion from an operation to an approximate base-b expansion.
\end{definition}

In general, an operation includes many binary operators. For example, the operation ``$[[1+[1+1]]----[1+1]]$" includes three ``$+$" and one ``$----$". Since each operator produces a binary operation, $n$ operator in an operation will produce $n$ binary operations. It is better to compute all binary operations in an operation in order. The order is denoted as \emph{Operation Order}.

\cite[\S5.3.1]{Ref4} stores tradition operations as an expression tree and then applies traversal algorithm to evaluate the expression tree. Likewise, each operation of the Operator axioms can be stored as an expression tree in which each number `1'  become a leaf node and each operator become an internal node. Then Operation Order is just the traversal order of the expression tree. In this paper, we choose inorder traversal as Operation Order. Figure \ref{tree} illustrates an expression tree for the number ``$[[1+[1+1]]----[1+1]]$".

\begin{figure}
\centering
\includegraphics[width=\textwidth]{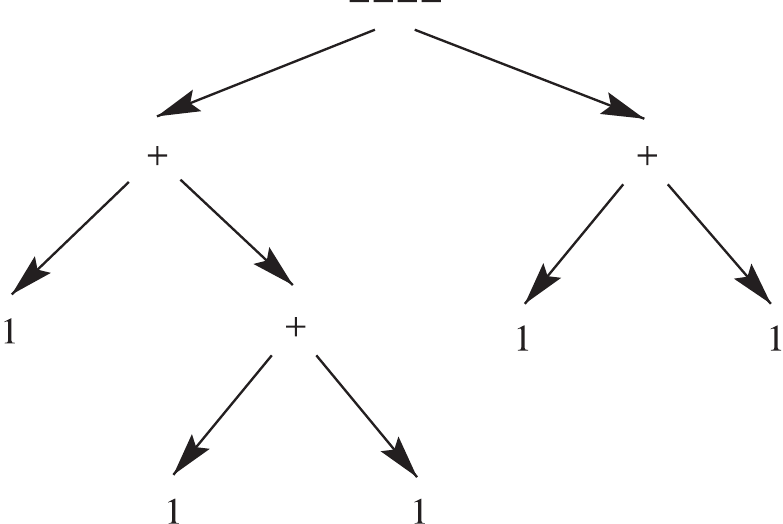}
\caption{An Expression Tree For The Number ``$[[1+[1+1]]----[1+1]]$"}
\label{tree}
\end{figure}

It is supposed that the numerical computation applies base-10 expansions. Then the numerical computation for ``$[[1+[1+1]]----[1+1]]$" will proceed with the following Operation Order:
\begin{eqnarray*}
[[1+[1+1]]--[1+1]] & = & [[1+2]----[1+1]] \\
& = & [3----[1+1]] \\
& = & [3----2]
\end{eqnarray*}

In summary, every operation in the Operator axioms can divide into many binary operations by Operation Order. So numerical computations focus on the binary operations.

\section{Numerical Computations For Binary Operations}
In \cite{Ref3}, Operator axioms have expressed the real number system. In this section, ``real number" refer to the real number deduced from Operator axioms\cite{Ref3}.

\subsection{Division Of Binary Operations}

According to the complexity of numerical computations, we divide binary operations into low operations, middle operations and high operations. \autoref{table:division} lists their elements in detail.

\begin{table}[h]
\begin{center}
\centering \caption{Division Of Binary Operations}
\label{table:division}
\begin{tabular}{|c|c|c|c|}
\hline
& \textbf{Low Operations} & \textbf{Middle Operations} & \textbf{High Operations} \\
\hline
\textbf{Operators} & $ +, ++, -, --, /, // $ & $ +++, ---, /// $ & \tabincell{c}{$ ++++, +++++, \cdots, $ \\
$ ----, -----, \cdots, $ \\
$ ////, /////, \cdots $} \\
\hline
\end{tabular}
\end{center}
\end{table}

\subsection{Numerical Computations For Low Operations}
In the Operator axioms, $ / $ is equal to $ - $ while $ // $ is equal to $ -- $. From a traditional viewpoint, the low operations ``$ +, ++, -, -- $" are equal to basic arithmetic operations ``$ +, \times, -, \div $". So numerical computations for low operations have been constructed in elementary arithmetic.

\subsection{Numerical Computations For Middle Operations}\label{mid}
From a traditional viewpoint, $ +++ $ is an exponentiation operation, $ --- $ is a root-extraction operation and $ /// $ is a logarithm operation. In this subsection, we import the numerical computations for the middle operations ``$ +++, ---, /// $" in \cite[\S 23]{Ref5}.

Let $e$ be Euler's number. It is supposed that $ a \in \bigl(-\infty, +\infty\bigl) $ is a base-b expansion, $ n  \in Z $ and $ k \in N $. The numerical computation for $ [e+++a] $ can be constructed with the Taylor-series expansion as follows.
\begin{eqnarray*}
[e+++a] & = & \left[ \lim\limits_{k \to +\infty} \left( \sum\limits_{n=0}^{k} [[a+++n]--[n!]] \right) \right] \\
& \approx & [1 + [a--[1!]] + [[a+++2]--[2!]] + [[a+++3]--[3!]] + \cdots + \\
&& [[a+++n]--[n!]] + \cdots]
\end{eqnarray*}

It is supposed that $ a \in \bigl(0, +\infty\bigl) $ is a base-b expansion and $ n  \in Z $. Let $ b=[[a-1]--[a+1]] $, then the numerical computation for $ [a///e] $ can be constructed with the Taylor-series expansion as follows.
\begin{eqnarray*}
[a///e] & = &[[[1+b]--[1-b]]///e] \\
& = & \left[ 2++\left[ \lim\limits_{k \to +\infty} \left( \sum\limits_{n=0}^{k} [[b+++[2n+1]]--[2n+1]] \right) \right] \right] \\
& \approx & [2++[b + [[b+++3]--3] + [[b+++5]--5] + \cdots + \\
&& [[b+++[2n+1]]--[2n+1]] + \cdots]]
\end{eqnarray*}

It is supposed that $a$ and $b$ are two base-b expansions, where $ a \in \bigl(0, +\infty\bigl) $ and $ b \in \bigl(-\infty, +\infty\bigl) $. Then the numerical computation for $ [a+++b] $ can be divided and conquered with the identity $ [a+++b] = [e+++[b++[a///e]]] $.

It is supposed that $ [a+++b] $ is a real number, where $ a \in \bigl(-\infty, 0\bigl] $ and $ b \in \bigl(-\infty, +\infty\bigl) $ are two base-b expansions. Then the numerical computation for $ [a+++b] $ can always be equated with the basic numerical computations as above and the basic arithmetic operations in the axioms (OA.1)$\sim$(OA.75).

\subsection{Numerical Computations For High Operations}
It is supposed that the constants $ n, a_1, a_2, b_1, b_2, c_1, c_2, m \in N $.

Zero can be equated with a fraction $ [[1-1]--1] $. Any non-zero base-b expansion can be equated with a fraction $ [[1-1]\pm[a_1--a_2]] $ for $ a_1, a_2 \in N $.

It is supposed that the constant $ d \in R $ with $ [1-1] < d $. It is supposed that the constant $ e \in R $ with $ 1 < e $. For clarity, we rename VE Function\cite{Ref3} to Prefix Function. For clarity, we rename EV Function\cite{Ref3} to Suffix Function. 

\begin{definition}\cite[Definition 3.1]{Ref3}
\emph{Prefix Function} is the function $ f : \bigl[1, +\infty\bigl) \rightarrow R $ defined by $ f(x)=[x+++\dot{e}d] $.
\end{definition}

\begin{definition}\cite[Definition 3.2]{Ref3}
\emph{Suffix Function} is the function $ f : R \rightarrow R $ defined by $ f(x)=[e+++\dot{e}x] $.
\end{definition}

\begin{definition}\cite[Definition 3.3]{Ref3}
\emph{Fundamental operator functions} are \emph{Prefix Function} and \emph{Suffix Function}.
\end{definition}

\begin{theorem}\cite[Theorem 3.4]{Ref3}\label{Prefix}
The Prefix Function $ f(x)=[x+++\dot{e}d] $ is continuous, unbounded and strictly increasing.
\end{theorem}

\begin{theorem}\cite[Theorem 3.5]{Ref3}\label{Suffix}
The Suffix Function $ f(x)=[e+++\dot{e}x] $ is continuous, unbounded and strictly increasing.
\end{theorem}

It is supposed that the constant $ t, u, v \in R $.

\begin{definition}
\emph{Root Equations} are the equations such as $f(x)=t$ such that:
\begin{enumerate}
\item The function $f(x)$ is real and continuous on any closed interval $ \bigl[u, v\bigl] $ in the domain;
\item The equation $f(x)=t$ has only one root on the above interval $ \bigl[u, v\bigl] $;
\end{enumerate}
\end{definition}

\cite[TABLE PT2.3]{Ref6} lists common root-finding methods and their convergence conditions. When $a$ acts as the lower guess and $b$ acts as the upper guess, both the bisection method\cite[\S 5.2]{Ref6} and Brent's method\cite[\S 6.4]{Ref6} always converge and find the only root on $ \bigl[u, v\bigl] $. But Brent's method converges faster than the bisection method and thus acts as the main root-finding method for Root Equations.

In the following, we construct all numerical computations for high operations by induction. It is supposed that the constant $ p_1 \in R $ with $ 1 \leq p_1 $. It is supposed that the constant $ p_2 \in R $. It is supposed that the constant $ q_1 \in R $ with $ 1 \leq q_1 $. It is supposed that the constant $ q_2 \in R $ with $ [1-1] \leq q_2 $. It is supposed that the constant $ r_1 \in R $ with $ [1-1] < r_1 $. It is supposed that the constant $ r_2 \in R $ with $ 1 < r_2 $. In the following, we approximate these constants with those fractions such as $[a_1--a_2]$.

\begin{enumerate}\setlength{\parindent}{2em}
\item The numerical computations for the operations $ [p_1++++p_2] $, $ [q_1----q_2] $, $ [r_1////r_2] $ are constructed.

\begin{enumerate}\setlength{\parindent}{2em}
\item A numerical computation for $ [[a_1--a_2]++++[b_1--b_2]] $ with $ 1 \leq [a_1--a_2] $ and $ [1-1] \leq [b_1--b_2] $ can be constructed.

\begin{enumerate}\setlength{\parindent}{2em}
\item $ [b_1--b_2] \leq 1 $.
\begin{eqnarray*}
(A1) & (\bar{c} = [[[a_1--a_2]-1]++[1+[1+1]]]) \wedge  & \\
& (\bar{d} = [[[a_1--a_2]-1]++[1+1]]) \wedge \\
& (\bar{e} = [[b_1--b_2]+++[1+1]]) \wedge  & \\
& (\bar{f} = [[b_1--b_2]+++[1+[1+1]]]) \wedge & \\
& ([[a_1--a_2]++++[b_1--b_2]] = & \\
& [[1+[\bar{c}++[\bar{e}+++1]]]-[\bar{d}++[\bar{f}+++1]]]) & \text{by (OA.83),(OA.28)} \\
(A2) & (\bar{c} = [[[a_1--a_2]-1]++[1+[1+1]]]) \wedge  & \\
& (\bar{d} = [[[a_1--a_2]-1]++[1+1]]) \wedge \\
& (\bar{e} = [[b_1--b_2]+++[1+1]]) \wedge  & \\
& (\bar{f} = [[b_1--b_2]+++[1+[1+1]]]) \wedge & \\
& ([[a_1--a_2]++++[b_1--b_2]] = & \\
& [[1+[\bar{c}++\bar{e}]]-[\bar{d}++\bar{f}]]) & \text{by (OA.83),(OA.62)} \\
\end{eqnarray*}

\item $ 1 < [b_1--b_2] $.
\begin{eqnarray*}
(B1) & [[a_1--a_2]++++[b_1--b_2]] = & \\
& [[a_1--a_2]+++[[a_1--a_2]++++[[b_1--b_2]-1]]] & \text{by (OA.82)} \\
(B2) & \text{It is supposed that $ [1-1] \leq [[b_1--b_2]-n] $ and } & \\
& \text{$ [[b_1--b_2]-n] \leq 1 $. Let us distinguish these } & \\
& \text{$[a_1--a_2]$ with the subscripts } \{(1), (2), (3), \cdots\}. & \\
(B3) & \Rightarrow [[a_1--a_2]++++[b_1--b_2]] = & \\
& [[a_1--a_2]_{(1)}+++[[a_1--a_2]_{(2)}+++ \cdots \\
& [[a_1--a_2]_{n}+++
[[a_1--a_2]++++ & \\
& [[b_1--b_2]-n]] \cdots ]] & \text{by (OA.82),(B2)} \\
(B4) & (\bar{c} = [[[a_1--a_2]-1]++[1+[1+1]]]) \wedge  & \\
& (\bar{d} = [[[a_1--a_2]-1]++[1+1]]) \wedge \\
& (\bar{e} = [[[b_1--b_2]-n]+++[1+1]]) \wedge  & \\
& (\bar{f} = [[[b_1--b_2]-n]+++[1+[1+1]]]) \wedge & \\
& ([[a_1--a_2]++++[[b_1--b_2]-n]] = & \\
& [[1+[\bar{c}++\bar{e}]]-[\bar{d}++\bar{f}]]) & \text{by (B2),(A2)} \\
(B5) & \Rightarrow (\bar{c} = [[[a_1--a_2]-1]++[1+[1+1]]]) \wedge  & \\
& (\bar{d} = [[[a_1--a_2]-1]++[1+1]]) \wedge \\
& (\bar{e} = [[[b_1--b_2]-n]+++[1+1]]) \wedge  & \\
& (\bar{f} = [[[b_1--b_2]-n]+++[1+[1+1]]]) \wedge & \\
& ([[a_1--a_2]++++[b_1--b_2]] = & \\
& [[a_1--a_2]_{(1)}+++[[a_1--a_2]_{(2)}+++ \cdots \\
& [[a_1--a_2]_{n}+++
[[1+[\bar{c}++\bar{e}]]-[\bar{d}++\bar{f}]] \cdots ]]) & \text{by (B3),(B4)} \\
\end{eqnarray*}

Then (A1)$\sim$(A2) and (B1)$\sim$(B4) have reduced one $ ++++ $ operation to many $ +++ $ operations. Since \S \ref{mid} has constructed the numerical computation for $ [[a_1--a_2]++++[b_1--b_2]] $, (A1)$\sim$(A2) and (B1)$\sim$(B4) can achieve a numerical computation for $ [[a_1--a_2]++++[b_1--b_2]] $.
\end{enumerate}

\item A numerical computation for $ [[a_1--a_2]----[b_1--b_2]] $ with $ 1 \leq [a_1--a_2] $ and $ [1-1] < [b_1--b_2] $ can be constructed.

The numerical computation for $ [[a_1--a_2]----[b_1--b_2]] $ is equated with the numerical root finding of the equation $ x=[[a_1--a_2]----[b_1--b_2]] $.
\begin{eqnarray*}
(A1) & x=[[a_1--a_2]----[b_1--b_2]] & \\
(A2) & \Rightarrow [x++++[b_1--b_2]]= & \\
& [[[a_1--a_2]----[b_1--b_2]] & \\
& ++++[b_1--b_2]] & \text{by (OA.108)} \\
(A3) & \Rightarrow [x++++[b_1--b_2]]=[a_1--a_2] & \text{by (OA.79),(OA.24),} \\ 
&& \text{(OA.25)}
\end{eqnarray*}

According to (OA.37)$\sim$(OA.40), the function $ f(x)=[x++++[b_1--b_2]] $ is defined on the domain $ \bigl[1, +\infty\bigl) $. Theorem \ref{Prefix} implies that we can iteratively increase $v$ by step 1 from $v=1$ until $ [a_1--a_2] < [v++++[b_1--b_2]] $ holds. 

Theorem \ref{Prefix} implies that $ f(x)=[x++++[b_1--b_2]] $ is continuous on the domain $ \bigl[1, v\bigl] $. (OA.75) derives that $ [1++++[b_1--b_2]] = 1 $. So (OA.95) derives that $ [1++++[b_1--b_2]] \leq [a_1--a_2] $. In summary, both $ [1++++[b_1--b_2]] \leq [a_1--a_2] $ and $ [a_1--a_2] < [v++++[b_1--b_2]] $ hold.

Then Intermediate Value Theorem derives that the equation $ [x++++[b_1--b_2]]=[a_1--a_2] $ has only one root on the domain $ \bigl[1, v\bigl] $. Since Theorem \ref{Prefix} implies that the equation $ [x++++[b_1--b_2]]=[a_1--a_2] $ has no root on the domain $ \bigl(v, +\infty\bigl) $, the equation $ [x++++[b_1--b_2]]=[a_1--a_2] $ has only one root on the domain $ \bigl[[1-1], +\infty\bigl) $. Since the equation $ [x++++[b_1--b_2]]=[a_1--a_2] $ belongs to Root Equations, Brent's method can find the only root of the equation and constructs the numerical computation for $ [[a_1--a_2]----[b_1--b_2]] $.

\item A numerical computation for $ [[a_1--a_2]////[b_1--b_2]] $ with $ 1 \leq [a_1--a_2] $ and with $ 1 < [b_1--b_2] $ can be constructed.

The numerical computation for $ [[a_1--a_2]////[b_1--b_2]] $ is equated with the numerical root finding of the equation $ x=[[a_1--a_2]////[b_1--b_2]] $.
\begin{eqnarray*}
(A1) & x=[[a_1--a_2]////[b_1--b_2]] & \\
(A2) & [1-1] < [[a_1--a_2]////[b_1--b_2]] & \text{by (OA.33)} \\
(A3) & \Rightarrow 1 < [[b_1--b_2]++++ & \\
& [[a_1--a_2]////[b_1--b_2]]] & \text{by (OA.37),(OA.72)} \\
(A4) & \Rightarrow ([[b_1--b_2]++++ & \\
& [[a_1--a_2]////[b_1--b_2]]]) & \text{by (OA.90)} \\
(A5) & \Rightarrow [[b_1--b_2]++++x]= & \\
& [[b_1--b_2]++++ & \\
& [[a_1--a_2]////[b_1--b_2]]] & \text{by (OA.105)} \\
(A6) & \Rightarrow [[b_1--b_2]++++x]=[a_1--a_2] & \text{by (OA.80),(OA.104)}
\end{eqnarray*}

According to (OA.37)$\sim$(OA.40) and (OA.73), the function $ f(x)=[[b_1--b_2]++++x] $ is defined on the domain $ \bigl(-\infty, +\infty\bigl) $. Theorem \ref{Suffix} implies that we can iteratively decrease $u$ by step 1 from $u=1$ until $ [[b_1--b_2]++++u] < [a_1--a_2] $ holds. Theorem \ref{Suffix} also implies that we can iteratively increase $v$ by step 1 from $v=1$ until $ [a_1--a_2] < [[b_1--b_2]++++v] $ holds. 

Theorem \ref{Suffix} implies that $ f(x)=[[b_1--b_2]++++x] $ is continuous on the domain $ \bigl(-\infty, +\infty\bigl) $. In summary, both $ [[b_1--b_2]++++u] < [a_1--a_2] $ and $ [a_1--a_2] < [[b_1--b_2]++++v] $ hold.

Then Intermediate Value Theorem derives that the equation $ [[b_1--b_2]++++x]=[a_1--a_2] $ has only one root on the domain $ \bigl[u, v\bigl] $. Since Theorem \ref{Suffix} implies that the equation $ [[b_1--b_2]++++x]=[a_1--a_2] $ has no root on the domains $ \bigl(-\infty, u\bigl) $ and $ \bigl(v, +\infty\bigl) $, the equation $ [[b_1--b_2]++++x]=[a_1--a_2] $ has only one root on the domain $ \bigl[u, v\bigl] $. Since the equation $ [[b_1--b_2]++++x]=[a_1--a_2] $ belongs to Root Equations, Brent's method can find the only root of the equation and constructs the numerical computation for $ [[a_1--a_2]////[b_1--b_2]] $.
\end{enumerate}

\item If the numerical computations for $ [p_1+++\dot{e}p_2] $, $ [q_1---\dot{f}q_2] $, $ [r_1///\dot{g}r_2] $ have been constructed, then the numerical computations for $ [p_1++++\dot{e}p_2] $, $ [q_1----\dot{f}q_2] $, $ [r_1////\dot{g}r_2] $ can also be constructed.

According to (OA.19), the symbol `$e$' represents some successive `$+$'\textbf{---}``$+ \cdots +$". According to (OA.20), the symbol `$f$' represents some successive `$-$'\textbf{---}``$- \cdots -$". According to (OA.21), the symbol `$g$' represents some successive `$/$'\textbf{---}``$/ \cdots /$".

\begin{enumerate}\setlength{\parindent}{2em}
\item A numerical computation for $ [[a_1--a_2]++++\dot{e}[b_1--b_2]] $ with $ 1 \leq [a_1--a_2] $ and $ [1-1] \leq [b_1--b_2] $ can be constructed.
\begin{enumerate}\setlength{\parindent}{2em}
\item $ [b_1--b_2] \leq 1 $.
\begin{eqnarray*}
(A1) & (\bar{c} = [[[a_1--a_2]-1]++[1+[1+1]]]) \wedge  & \\
& (\bar{d} = [[[a_1--a_2]-1]++[1+1]]) \wedge \\
& (\bar{e} = [[b_1--b_2]+++[1+1]]) \wedge  & \\
& (\bar{f} = [[b_1--b_2]+++[1+[1+1]]]) \wedge & \\
& ([[a_1--a_2]++++\dot{h}[b_1--b_2]] = & \\
& [[1+[\bar{c}++[\bar{e}+++[1+\dot{k}]]]] - & \\
& [\bar{d}++[\bar{f}+++[1+\dot{k}]]]]) & \text{by (OA.83),(OA.29)} \\
\end{eqnarray*}

\item $ 1 < [b_1--b_2] $.
\begin{eqnarray*}
(B1) & [[a_1--a_2]++++\dot{e}[b_1--b_2]] = & \\
& [[a_1--a_2]+++\dot{e}[[a_1--a_2]++++\dot{e}[[b_1--b_2]-1]]] & \text{by (OA.82)} \\
(B2) & \text{It is supposed that $ [1-1] \leq [[b_1--b_2]-n] $ and } & \\
& \text{$ [[b_1--b_2]-n] \leq 1 $. Let us distinguish these } & \\
& \text{$[a_1--a_2]$ with the subscripts } \{(1), (2), (3), \cdots\}. & \\
(B3) & \Rightarrow [[a_1--a_2]++++\dot{e}[b_1--b_2]] = & \\
& [[a_1--a_2]_{(1)}+++\dot{e}[[a_1--a_2]_{(2)}+++\dot{e} \cdots \\
& [[a_1--a_2]_{n}+++\dot{e}
[[a_1--a_2]++++\dot{e} & \\
& [[b_1--b_2]-n]] \cdots ]] & \text{by (OA.82),(B2)} \\
(B4) & (\bar{c} = [[[a_1--a_2]-1]++[1+[1+1]]]) \wedge  & \\
& (\bar{d} = [[[a_1--a_2]-1]++[1+1]]) \wedge \\
& (\bar{e} = [[[b_1--b_2]-n]+++[1+1]]) \wedge  & \\
& (\bar{f} = [[[b_1--b_2]-n]+++[1+[1+1]]]) \wedge & \\
& ([[a_1--a_2]++++\dot{e}[[b_1--b_2]-n]] = & \\
& [[1+[\bar{c}++\bar{e}]]-[\bar{d}++\bar{f}]]) & \text{by (B3),(A1)} \\
(B5) & \Rightarrow (\bar{c} = [[[a_1--a_2]-1]++[1+[1+1]]]) \wedge  & \\
& (\bar{d} = [[[a_1--a_2]-1]++[1+1]]) \wedge \\
& (\bar{e} = [[[b_1--b_2]-n]+++[1+1]]) \wedge  & \\
& (\bar{f} = [[[b_1--b_2]-n]+++[1+[1+1]]]) \wedge & \\
& ([[a_1--a_2]++++\dot{e}[b_1--b_2]] = & \\
& [[a_1--a_2]_{(1)}+++\dot{e}[[a_1--a_2]_{(2)}+++\dot{e} \cdots \\
& [[a_1--a_2]_{n}+++\dot{e}
[[1+[\bar{c}++\bar{e}]]-[\bar{d}++\bar{f}]] \cdots ]]) & \text{by (B3),(B4)} \\
\end{eqnarray*}

Then (A1) and (B1)$\sim$(B5) have reduced one $ ++++\dot{e} $ operation to many $ +++\dot{e} $ operations. Since the numerical computation for $ [[a_1--a_2]+++\dot{e}[b_1--b_2]] $ has been supposed to be constructed, (A1) and (B1)$\sim$(B5) can achieve a numerical computation for $ [[a_1--a_2]++++\dot{e}[b_1--b_2]] $.
\end{enumerate}

\item A numerical computation for $ [[a_1--a_2]----\dot{f}[b_1--b_2]] $ with $ 1 \leq [a_1--a_2] $ and $ [1-1] < [b_1--b_2] $ can be constructed.

The numerical computation for $ [[a_1--a_2]----\dot{f}[b_1--b_2]] $ is equated with the numerical root finding of the equation $ x=[[a_1--a_2]----\dot{f}[b_1--b_2]] $.
\begin{eqnarray*}
(A1) & x=[[a_1--a_2]----\dot{f}[b_1--b_2]] & \\
(A2) & \Rightarrow [x++++\dot{e}[b_1--b_2]]= & \\
& [[[a_1--a_2]----i[b_1--b_2]] & \\
& ++++h[b_1--b_2]] & \text{by (OA.105)} \\
(A3) & \Rightarrow [x++++\dot{e}[b_1--b_2]]=[a_1--a_2] & \text{by (OA.78),(OA.24),} \\ 
&& \text{(OA.25)}
\end{eqnarray*}

According to (OA.37)$\sim$(OA.40), the function $ f(x)=[x++++\dot{e}[b_1--b_2]] $ is defined on the domain $ \bigl[1, +\infty\bigl) $. Theorem \ref{Prefix} implies that we can iteratively increase $v$ by step 1 from $v=1$ until $ [a_1--a_2] < [v++++\dot{e}[b_1--b_2]] $ holds. 

Theorem \ref{Prefix} implies that $ f(x)=[x++++\dot{e}[b_1--b_2]] $ is continuous on the domain $ \bigl[1, v\bigl] $. (OA.75) derives that $ [1++++\dot{e}[b_1--b_2]] = 1 $. So (OA.95) derives that $ [1++++\dot{e}[b_1--b_2]] \leq [a_1--a_2] $. In summary, both $ [1++++\dot{e}[b_1--b_2]] \leq [a_1--a_2] $ and $ [a_1--a_2] < [v++++\dot{e}[b_1--b_2]] $ hold.

Then Intermediate Value Theorem derives that the equation $ [x++++\dot{e}[b_1--b_2]]=[a_1--a_2] $ has only one root on the domain $ \bigl[1, v\bigl] $. Since Theorem \ref{Prefix} implies that the equation $ [x++++\dot{e}[b_1--b_2]]=[a_1--a_2] $ has no root on the domain $ \bigl(v, +\infty\bigl) $, the equation $ [x++++\dot{e}[b_1--b_2]]=[a_1--a_2] $ has only one root on the domain $ \bigl[[1-1], +\infty\bigl) $. Since the equation $ [x++++\dot{e}[b_1--b_2]]=[a_1--a_2] $ belongs to Root Equations, Brent's method can find the only root of the equation and constructs the numerical computation for $ [[a_1--a_2]----\dot{f}[b_1--b_2]] $.

\item A numerical computation for $ [[a_1--a_2]////\dot{g}[b_1--b_2]] $ with $ 1 \leq [a_1--a_2] $ and with $ 1 < [b_1--b_2] $ can be constructed.

The numerical computation for $ [[a_1--a_2]////\dot{g}[b_1--b_2]] $ is equated with the numerical root finding of the equation $ x=[[a_1--a_2]////\dot{g}[b_1--b_2]] $.
\begin{eqnarray*}
(A1) & x=[[a_1--a_2]////\dot{g}[b_1--b_2]] & \\
(A2) & [1-1] < [[a_1--a_2]////\dot{g}[b_1--b_2]] & \text{by (OA.33)} \\
(A3) & \Rightarrow 1 < [[b_1--b_2]++++h & \\
& [[a_1--a_2]////j[b_1--b_2]]] & \text{by (OA.37),(OA.72)} \\
(A4) & \Rightarrow ([[b_1--b_2]++++h & \\
& [[a_1--a_2]////j[b_1--b_2]]]) & \text{by (OA.90)} \\
(A5) & \Rightarrow [[b_1--b_2]++++\dot{e}x]= & \\
& [[b_1--b_2]++++h & \\
& [[a_1--a_2]////j[b_1--b_2]]] & \text{by (OA.105)} \\
(A6) & \Rightarrow [[b_1--b_2]++++\dot{e}x]=[a_1--a_2] & \text{by (OA.80),(OA.104)}
\end{eqnarray*}

According to (OA.37)$\sim$(OA.40) and (OA.73), the function $ f(x)=[[b_1--b_2]++++\dot{e}x] $ is defined on the domain $ \bigl(-\infty, +\infty\bigl) $. Theorem \ref{Suffix} implies that we can iteratively decrease $u$ by step 1 from $u=1$ until $ [[b_1--b_2]++++\dot{e}u] < [a_1--a_2] $ holds. Theorem \ref{Suffix} also implies that we can iteratively increase $v$ by step 1 from $v=1$ until $ [a_1--a_2] < [[b_1--b_2]++++\dot{e}v] $ holds. 

Theorem \ref{Suffix} implies that $ f(x)=[[b_1--b_2]++++\dot{e}x] $ is continuous on the domain $ \bigl(-\infty, +\infty\bigl) $. In summary, both $ [[b_1--b_2]++++\dot{e}u] < [a_1--a_2] $ and $ [a_1--a_2] < [[b_1--b_2]++++\dot{e}v] $ hold.

Then Intermediate Value Theorem derives that the equation $ [[b_1--b_2]++++\dot{e}x]=[a_1--a_2] $ has only one root on the domain $ \bigl[u, v\bigl] $. Since Theorem \ref{Suffix} implies that the equation $ [[b_1--b_2]++++\dot{e}x]=[a_1--a_2] $ has no root on the domains $ \bigl(-\infty, u\bigl) $ and $ \bigl(v, +\infty\bigl) $, the equation $ [[b_1--b_2]++++\dot{e}x]=[a_1--a_2] $ has only one root on the domain $ \bigl[u, v\bigl] $. Since the equation $ [[b_1--b_2]++++\dot{e}x]=[a_1--a_2] $ belongs to Root Equations, Brent's method can find the only root of the equation and constructs the numerical computation for $ [[a_1--a_2]////\dot{g}[b_1--b_2]] $.
\end{enumerate}

\item By induction, the numerical computations for $ [p_1+++++p_2] $, $ [p_1++++++p_2] $, $ [p_1+++++++p_2] $, $ \cdots $, $ [q_1-----q_2] $, $ [q_1------q_2] $, $ [q_1-------q_2] $, $ \cdots $, $ [r_1/////r_2] $, $ [r_1//////r_2] $, $ [r_1///////r_2] $, $ \cdots $ can all be constructed.
\end{enumerate}

\section{Some Concepts In The Operator Axioms}
We define some replacements for the notations of the Operator axioms, as is shown in \autoref{table:definitions}.

\renewcommand\arraystretch{1.5}
\begin{table}[h]
\begin{center}
\centering \caption{Replacements Of Notations.}
\label{table:definitions}
\begin{tabular}{|c|c|c|}
\hline
\textbf{Replacement} & \textbf{Notation In Operator axioms} \\
\hline
$+'$ & $\dot{e}$ \\
\hline
$-'$ & $\dot{f}$ \\
\hline
$/'$ & $\dot{j}$ \\
\hline
$+'_1$ & $+$ \\
\hline
$+'_2$ & $++$ \\
\hline
$+'_3$ & $+++$ \\
\hline
$+'_n$ & $\underbrace{++\cdots+}_{n}$ \\
\hline
$-'_1$ & $-$ \\
\hline
$-'_2$ & $--$ \\
\hline
$-'_3$ & $---$ \\
\hline
$-'_n$ & $\underbrace{--\cdots-}_{n}$ \\
\hline
$/'_1$ & $/$ \\
\hline
$/'_2$ & $//$ \\
\hline
$/'_3$ & $///$ \\
\hline
$/'_n$ & $\underbrace{//\cdots/}_{n}$ \\
\hline
\end{tabular}
\end{center}
\end{table}

The notation $+'$ can be replaced by any element of the set \{$ +, ++, +++, ++++, \cdots $\}. The notation $-'$ can be replaced by any element of the set \{$ -, --, ---, ----, \cdots $\}. The notation $/'$ can be replaced by any element of the set \{$ /, //, ///, ////, \cdots $\}.

We define the pronunciations for some expressions in the Operator axioms, as is shown in \autoref{table:pronunciations}.

\renewcommand\arraystretch{1.5}
\begin{table}[h]
\begin{center}
\centering \caption{Pronunciations For Some Expressions.}
\label{table:pronunciations}
\begin{tabular}{|c|c|c|}
\hline
\textbf{Expression} & \textbf{Pronunciation} \\
\hline
$+'$ & addote \\
\hline
$-'$ & subote \\
\hline
$/'$ & logote \\
\hline
$a +'_n b$ & a addote n to b \\
\hline
$a -'_n b$ & a subote n to b \\
\hline
$a /'_n b$ & a logote n to b \\
\hline
\end{tabular}
\end{center}
\end{table}

We divide the real operators into an ordered level with the natural numbers. \autoref{table:levels} lists the levels of the real operators in detail.

\begin{table}[h]
\begin{center}
\centering \caption{Level Of Operators}
\label{table:levels}
\begin{tabular}{|c|c|c|}
\hline
\textbf{Level} & \textbf{Operators} \\
\hline
\text{1} & $+'_1, -'_1, /'_1$ \\
\hline
\text{2} & $+'_2, -'_2, /'_2$ \\
\hline
\text{3} & $+'_3, -'_3, /'_3$ \\
\hline
\text{$ \cdots $} & $ \cdots $ \\
\hline
\text{n} & $+'_n, -'_n, /'_n$ \\
\hline
\end{tabular}
\end{center}
\end{table}

The order of real operators is listed as follows:

level-1 $<$ level-2 $<$ level-3 $<\cdots$ $<$ level-n 

We define the operations of real operators as follows.
\begin{definition}\label{Complete}
\emph{Complete Operations} are all the binary operations of real operators.
\end{definition}

According to the Definition \ref{Complete}, all operations such as $a +'_n b$, $a -'_n b$, $a /'_n b$ compose the complete operations.

%\newpage
\clearpage
%%%%%%%%%%%%%%%%%%%%%%%%%%%%%%%%%%%%%%%%%%%%%%%%%%%%%%%%%%%%%

% BibTeX users please use
% \bibliographystyle{}
% \bibliography{}
%
% Non-BibTeX users please use

\end{document}